\newcommand{\po}{\left(}
\newcommand{\pf}{\right)}
\newcommand{\cco}{\llbracket}
\newcommand{\ccf}{\rrbracket}
\newcommand{\R}{\mathbb R} 
\newcommand{\T}{\mathbb T} 
\newcommand{\Z}{\mathbb Z}
\newcommand{\dd}{\text{d}}
\newcommand{\na}{\nabla}
\newcommand{\nv}[1]{#1}
\newtheorem{thm}{Theorem}
\newtheorem{assu}{Assumption}
\newtheorem{lem}[thm]{Lemma}
\newtheorem{prop}[thm]{Proposition}
\author{Pierre Monmarch\'e}
\title{A note on Fisher Information hypocoercive decay for the linear Boltzmann equation.}
\begin{document}

\maketitle

\begin{abstract}
This note deals with the linear Boltzmann equation in the non-compact setting with a confining potential which is close to quadratic. We prove that in this situation, starting from a smooth initial datum, the Fisher Information (and hence, the relative entropy) with respect to the stationary state  converges exponentially fast to zero.
\end{abstract}

\section{Introduction}\label{SectionSettings}
We are interested in the long-time convergence to equilibrium of the solution $f$ of the so-called linear Boltzmann (or BGK) equation
\begin{equation}\label{eq:f}
\partial_t f_t(x,y) \ = \ -v\cdot \na_x f_t(x,y) + \na U(x) \cdot \na_y f_t(x,y) + \lambda Qf_t(x,y) 
\end{equation}
where $(x,y)\in\R^{2d}$, $d\in\mathbb N_*$, $\lambda>0$ is constant, $  U\in \mathcal C^2\po \R^d,\R\pf$ and $Q$ is either $Q_1$ or $Q_2$ with 
\begin{eqnarray*}
Q_1 f(x,y) & =& \gamma_{d,\sigma}(y) \int_{\R^d}   f(x,v)   \dd v - f(x,y)\\
Q_2 f(x,y) & =&   \sum_{k=1}^d \po \gamma_{1,\sigma}(y_k)\int_{\R} f(x,y_1,\dots,y_{k-1},w,y_{k+1},\dots,y_d) \dd w  - f(x,y)\pf 
\end{eqnarray*}
with,  for some  $\sigma>0$,
\[\gamma_{p,\sigma}(y) = \frac{e^{-\frac1{2\sigma^2}|y|^2} }{\po 2\pi \sigma^2\pf^{p/2} }\]
a Gaussian measure on $\R^p$. We assume that $f_0$ is a probability density so that, mass and positivity being conserved through time, $f_t$ is a probability density for all $t\geqslant 0$. Denoting $H(x,y)=U(x)+|y|^2/2$, we suppose that $\exp(-H/\sigma^2)$ is integrable and we denote by $\mu$ the probability law with density proportional to it (we also write $\mu$ this density). Then $\mu$ is a fixed point of \eqref{eq:f}. Our goal is to give a quantitative estimate for the convergence of a solution of \eqref{eq:f} toward $\mu$. In fact we will rather work with the relative density $h_t = f_t/\mu$, which solves 
\begin{equation}\label{EqPrincipale}
\partial_t h_t \ =\  Lh_t
\end{equation}
with
\[Lh(x,y)\ = \ -v\cdot \na_x h(x,y) + \na U(x) \cdot \na_y h(x,y) + \eta \po Ph - h\pf\,,\]
where $(P,\eta)$ is either $(P_1,\eta_1)$ or $(P_2,\eta_2)$ with $\eta_1=\lambda $, $\eta_2=\lambda  d$ and
\begin{eqnarray*}
P_1 h(x,y) & =& \int_{\R^d} h(x,v) \gamma_{d,\sigma}(v) \dd v\\
P_2 h(x,y) & =& \frac1d\sum_{k=1}^d\int_{\R} h(x,y_1,\dots,y_{k-1},w,y_{k+1},\dots,y_d) \gamma_{1,\sigma}(w) \dd w \,.
\end{eqnarray*}
Remark that $P_1$ and $P_2$ are Markov operators.

Equation~\eqref{eq:f} is a classical model in statistical physics, modelling the motion of a particle influenced by an external potential $U$ and by random collisions with other particles with Gaussian velocities. We refer the interested reader to \cite{Leautaud2015} and references within for details. Moreover, it intervenes in Markov Chain Monte Carlo methods. More precisely, denote $L^*$ the dual of $L$ in $L^2(\mu)$. Integrating by parts, we see that
\begin{equation}\label{eq:L*}
L^* \varphi(x,y) \ = \ v\cdot \na_x \varphi(x,y) - \na U(x) \cdot \na_y \varphi(x,y) + \eta \po P\varphi - \varphi\pf\,.
\end{equation}
 This is the generator of a Markov process $(X,Y)$  whose law solves \eqref{eq:f}. When $Q=Q_1$, the dynamics of the process is the following: the particle follows the Hamiltonian flow $\dot x = y$, $\dot y = -\na_x U(x)$ and, at random times with exponential law of intensity $\lambda$, the velocity $y$ is refreshed to a new Gaussian value. The motion is similar when $Q=Q_2$, except that each coordinate of the velocity has its own exponential clock, and is refreshed to a new Gaussian value independently from the other components. \nv{This process is sometimes called the randomized Hamiltonian Monte Carlo process \cite{RHMC}.} Since its law converges to $\mu$,  ergodic averages of the process can  be used as estimators for the expectations of some observables with respect to $\mu$. A non-asymptotic, quantitative long-time convergence estimate for \eqref{EqPrincipale} then classically provides bounds on the bias and variance of such estimators.

The question of the long-time convergence of \eqref{eq:f} (or equivalenty \eqref{EqPrincipale}) has been studied in much general forms in a number of works (see e.g. \cite{Leautaud2015} and references within).  The exponential convergence in the $L^2$ sense, i.e. the existence of constants $C,\theta>0$ such that
\begin{eqnarray*}
\| h_t - 1 \|^2_{L^2\po\mu\pf} & \leqslant & C e^{-\theta t} \| h_0 - 1 \|^2_{L^2\po\mu\pf},
\end{eqnarray*}
has been established under different assumptions by several authors (\cite{Herau2005,DMS2011,Leautaud2015,Achleitner2015}). This long-time convergence is said to be hypocoercive (\cite{Villani2009,DMS2011}), in the sense that $C$ is necessarily greater than 1 or, in other words, $h_t$ converges exponentially fast to $0$ but not at a constant rate (note that both the $L^2$ norm and the relative entropy studied below are non-increasing with time).

When one studies a system of $N$ particles with chain or mean-field interactions (so that $d=Nd'$, where $d'$ is the dimension of the ambient space), the $L^2$-norm is not well-adapted, since it scales badly in $N$. In these contexts, a more natural way to quantify the distance to equilibrium is the relative entropy $\int h \ln h \dd \mu$, as in \cite{MonmarcheVFP,MonmarcheGuillinVFP,OllaLetizia}. Nevertheless, entropic hypocoercivity results (see e.g. \cite{Villani2009,MonmarcheGamma,Wang2015}) are usually restricted to diffusion processes (i.e. differential operators). Indeed, since non-local operators such as $ \eta\po P-I\pf$ do not satisfy the chain rule, it is less easy to handle derivatives of non-quadratic quantities of $h$ and $\na h$. This is a general and important problem, related to the question of giving good definitions of non-local Fisher Information.

Nevertheless, for the linear Boltzmann equation, this  has been achieved by Evans \cite{Evans2017} in a recent paper in the case of the periodic torus (namely $x\in\T^d$, $\T = \R/\Z$) with no potential ($U=0$). The purpose of the present note is to show that the computations of Evans, together with the recent results on generalized Ornstein-Uhlenbeck processes (\cite{Arnold2015,Arnold2014,MonmarcheGamma}), allows in fact to deal with the case where $x\in\R^d$ and $U$ is close to being quadratic.

\begin{assu}\label{Hypothese}
\nv{There exist $K,\kappa>0$ such that $\kappa \leqslant \na^2 U(x) \leqslant K$ for all $x\in\R^d$.}
\end{assu}
In fact, we won't deal with the entropy itself, but with the classical Fisher Information
\begin{eqnarray*}
\mathcal I\po h\pf & =& \int \frac{|\na h|^2}{h} \dd\mu.
\end{eqnarray*}
Under Assumption~\ref{Hypothese}, the Hamiltonian $H$ is strictly convex, so that, by classical arguments (see e.g. \cite{BakryGentilLedoux}), $\mu$ satisfies a log-Sobolev inequality
\begin{eqnarray*}
\int h \ln h \dd \mu & \leqslant & c \mathcal I(h),
\end{eqnarray*}
where the constant $c$ only depends on $\kappa$ and $\sigma$. For elliptic or hypoelliptic diffusions, such as the kinetic Langevin (or Fokker-Planck) equation, a short-time regularization occurs, so that the Fisher Information is finite for all positive times given that the initial entropy is finite (see for instance \cite[Theorem 9]{MonmarcheGamma}). However, this is not the case for  equation \eqref{EqPrincipale}, and thus we will only consider smooth initial datum with $\mathcal I(h_0)<\infty$. More precisely, for the sake of simplicity, we will assume that, for some $\varepsilon\geqslant 0$,
\[h_0\in \mathcal A_\varepsilon \ :=\ \left\{g\in \mathcal C^\infty\po\R^{2d}\pf,\ \int g \dd \mu = 1, \ g\geqslant \varepsilon,\ \partial^\alpha g \text{ bounded }\forall \alpha\text{ multi-index}\right \}.\]
Note that, for any $\varepsilon\geqslant0$, the set $\mathcal A_\varepsilon$ is fixed by Equation \eqref{EqPrincipale}, as proved in \cite[Appendix]{CaceresCarilloGoudon} (here we don't need uniform in time estimates for the bounds of the derivatives). The result can then be extended by a density argument to all positive $h_0$ with $\mathcal I(h_0)<+\infty$. 
\begin{thm}\label{ThmMain}
Under Assumption \ref{Hypothese} let  $(h_t)_{t\geqslant 0}$ be a solution of \eqref{EqPrincipale} with  $h_0\in\mathcal A_0$ such that $\mathcal I(h_0) < +\infty$. Let
\[\theta \ = \  \frac23\po \frac{2\lambda K}{4K + \lambda^2} - \frac{(K-\kappa)^2}{\sqrt K } \pf \,,\qquad 
C\ = \  3 \max \po  \frac{4K^2}{4K + \lambda^2}, \frac{4K + \lambda^2}{4K^2}\pf\,.\]
Suppose that $\theta\geqslant 0$. Then, for all $t\geqslant 0$,
\begin{eqnarray*}
\mathcal I\po h_t\pf & \leqslant & C e^{-\theta t} \mathcal I\po h_0\pf.
\end{eqnarray*}
\end{thm}

\textbf{Remarks}
\begin{itemize}
\item The result is the same for $Q=Q_1$ or $Q_2$, and $C$ and $\theta$ do not depend on $\sigma$.
\item The log-Sobolev inequality satisfied by $\mu$ and Theorem~\ref{ThmMain} imply that, for some $C'>0$,
\begin{eqnarray*}
\int h_t \ln h_t \dd \mu & \leqslant & C' e^{-\theta  t}  \mathcal I(h_0).
\end{eqnarray*}
By the Pinsker's inequality, considering the Markov process $(X,Y)$ with generator $L^*$  given by \eqref{eq:L*}, we get that for all measurable set $A\subset \R^{2d}$,
\[|\mathbb P \po (X_t,Y_t)\in A\pf - \mu(A)| \ \leqslant \ \frac12 \|f_t -\mu\|_1 \ \leqslant \ \sqrt{\frac12 \int h_t \ln h_t \dd \mu  }\,. \]
This gives a bound on the bias of the Monte Carlo estimator of $\mu(A)$ based on the process $(X,Y)$, with constants $C'$ and $\theta$ that depends (explicitly) only on $\kappa,K,\gamma,\sigma$.

\item The assumption that the potential is convex is usual in the studies of the long-time behaviour of Markov processes. The fact that its Hessian is also bounded above, and more precisely that the Hessian is not too far from a constant matrix, is a much more rigid assumption, which already appeared in similar works \cite{Baudoin,Arnold2015}. Besides, there are examples of kinetic processes with a convex potential  with an unbounded Hessian, which does not converge exponentially fast to their equilibrium \cite{HairerMattingly}. Essentially,  we are able to deal with the Gaussian case because of some nice algebra, and have some room for a small perturbation.  More precisely, in the Gaussian case, the Jacobian of the drift is a constant matrix, so that the question of the contraction of a suitably modified Fisher Information boils down to a linear algebra problem (see Proposition~\ref{PropDeriveFisher} below). Then a Lipschitz perturbation from this linear case can be absorbed by the positive contraction of the linear case (see the end of the proof of Theorem~\ref{ThmMain}).
\item The rate of convergence is of order $\lambda$ when $\lambda$ goes to zero and of order $\lambda^{-1}$ when $\lambda$ goes to infinity, which is similar to the kinetic Langevin case (\cite{Iacobucci2017}), and expected. Indeed, when $\lambda$ is small, the typical time for the velocity to be refreshed (and thus, to mix) is $\lambda^{-1}$. On the other hand, when $\lambda$ is large, in a time of order 1, there are many jumps, and by the law of large number, the effective velocity is close to zero, and the position moves (and thus, mixes) slowly. If time is accelerated by a factor $\lambda$, the position then converges to an overdamped Langevin process.
\item \nv{In this particular close-to-quadratic case, Theorem \ref{ThmMain} answers the question raised in \cite[Section 1.5]{Evans2017}.}
\item Consider the case where $Q=Q_2$ and $U(x)=a|x|^2+ \frac1d\sum_{i,j=1}^d W(x_i-x_j)$ with an even potential $W$ with bounded Hessian and $a>0$.   This corresponds to a mean-field interaction between $N=d$ particles. Provided $\|W''\|_\infty$ is sufficiently small with respect to $a$ and $\lambda$, Theorem \ref{ThmMain} yields a speed of convergence to equilibrium wich is independent from the number of particles. Then, the arguments from \cite{MonmarcheVFP,MonmarcheGuillinVFP} may be adapted (the parallel coupling with Wiener processes being replaced by a parallel coupling with Poisson processes) to obtain uniform in time propagation of chaos, and long-time convergence for the non-linear PDE obtained at the limit (note that the latter is not the Boltzmann equation, for which the interaction lies at the level of the collisions rather than of the Hamiltonian).


\end{itemize}


 \section{Proof}\label{SectionProofs}

\nv{We write $h_t = e^{tL}h_0$  the solution of \eqref{EqPrincipale}  with initial condition $h_0$.} In the rest of the paper, we will always consider $h \in \mathcal A_\varepsilon$ with $\varepsilon>0$. Indeed, suppose that Theorem~\ref{ThmMain} has been proved for $h \in \mathcal A_\varepsilon$ with any arbitrary $\varepsilon>0$, and consider $h_0 \in \mathcal A_0$. Set $h_0^{(\varepsilon)} = (1-\varepsilon) h_0 + \varepsilon$. Then  $h_t^{(\varepsilon)} := e^{tL}h_0^{(\varepsilon)}= (1-\varepsilon) h_t + \varepsilon$ so that, applying Theorem~\ref{ThmMain} to $h_t^{(\varepsilon)}$ and letting $\varepsilon$ go to 0, the monotone convergence theorem yields the result for $h_t$. The restriction to the cases $\varepsilon>0$ will ensure that all the forthcoming derivations under the integral sign are correct. In particular, $\mathcal I(h)<+\infty$ for all $h\in\mathcal A_\varepsilon$ for $\varepsilon>0$.

  
We start with a general computation. Denoting by $A^T$ the transpose of a matrix (and seeing vectors as column matrices, so that the scalar product between two vectors $u$ and $v$ can be denoted by $u^T v$), for a symmetric matrix $M$, we write
 \begin{eqnarray*}
\mathcal I_M\po h\pf & =& \int \frac{(\na h)^T M \na h}{h} \dd\mu.
\end{eqnarray*}
Our aim is to construct $M$ such that $\partial_t \po \mathcal I_M\po e^{tL}h\pf\pf \leqslant - \theta \mathcal I_M\po e^{tL} h\pf$ with $\theta>0$. \nv{In the following, in a $2d \times 2d$ matrix, a $d\times d$ block equal to $\alpha I_d$ for some $\alpha \in \R$ will sometimes be denoted only  by $\alpha$, and $N\geqslant M$ stands for the usual order for symmetric matrices $N,M$.}

For an operator $A$, we write $\po \partial_t\pf_{|A}$ the derivative at $t=0$ along the semi-group $e^{tA}$.
 \begin{lem}\label{LemFischerSaut}
 Let $P$ be a Markov operator which fixes $\mathcal A_\varepsilon$,   $h\in\mathcal A_\varepsilon$  and $M=R^TR$ be a positive symmetric matrix. Then
 \begin{eqnarray*}
  \po \partial_t\pf_{|P-I} \mathcal I_{M}\po h\pf & \leqslant & -  \mathcal I_M\po h\pf + \mathcal I_M\po P h\pf  \,.
  \end{eqnarray*}
 \end{lem}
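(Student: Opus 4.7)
My plan is to exploit the joint convexity of the Fisher-type functional $\mathcal I_M$. Writing $M = R^T R$, I rewrite $\mathcal I_M(h) = \int \phi(h, \na h)\dd\mu$ with
$$\phi(u, v) \ =\ \frac{|Rv|^2}{u},\qquad (u,v)\in (0,\infty)\times \R^{2d}.$$
This $\phi$ is the perspective of the convex quadratic $v\mapsto |Rv|^2$, and is therefore jointly convex on its domain; equivalently, a direct computation of the Hessian gives
$$(a,b)^T \mathrm{Hess}\,\phi(u,v)\,(a,b)\ =\ \frac{2}{u}\left|R\left(b-\frac{a v}{u}\right)\right|^2 \geqslant 0.$$
Since $h\mapsto (h,\na h)$ is linear, the functional $\mathcal I_M$ inherits convexity in $h$.

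Since $P$ fixes $\mathcal A_\varepsilon$, the functions $h$, $Ph$, and any convex combination $(1-s)h+sPh$ with $s\in[0,1]$ all belong to $\mathcal A_\varepsilon$; in particular they are bounded below by $\varepsilon>0$, so $\mathcal I_M$ is finite on each of them. Convexity then yields the key tangent-line inequality
$$\mathcal I_M\po (1-s)h + sPh\pf \ \leqslant\ (1-s)\,\mathcal I_M(h) + s\,\mathcal I_M(Ph),\qquad s\in[0,1].$$

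It remains to connect the latter with the semigroup derivative. A Taylor expansion of $e^{t\lambda(P-I)}=e^{-\lambda t}\sum_{k\geqslant 0}(\lambda t)^k P^k/k!$ at $t=0$ yields
$$e^{t\lambda(P-I)}h\ =\ (1-\lambda t)h + \lambda t\, Ph + O(t^2),\qquad t\to 0^+,$$
and the uniform lower bound together with the bounded-derivative condition built into $\mathcal A_\varepsilon$ allows differentiation under the integral sign, so that $\mathcal I_M$ is differentiable along smooth paths at $h$, with derivative depending only on the initial tangent vector $\lambda(Ph-h)$. Hence
$$\po\partial_t\pf_{|\lambda(P-I)}\mathcal I_M(h)\ =\ \partial_t\big|_{t=0}\mathcal I_M\po (1-\lambda t)h+\lambda t\,Ph\pf.$$
Applying the convexity inequality at $s=\lambda t$, subtracting $\mathcal I_M(h)$ on both sides, dividing by $t>0$, and letting $t\to 0^+$ produces precisely $-\lambda\po \mathcal I_M(h) - \mathcal I_M(Ph)\pf$ on the right-hand side, which is the claim.

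There is no real obstacle in this lemma: the heart of the argument is the convexity of the Fisher-type functional, a classical observation, and the only technical point — licity of differentiation under the integral along the Poisson-type semigroup — is entirely controlled by the restriction to $\mathcal A_\varepsilon$ with $\varepsilon>0$ spelled out at the beginning of the proof section.
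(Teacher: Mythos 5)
Your proposal is correct and it reaches the lemma by a genuinely different route than the paper. The paper computes $\po\partial_t\pf_{|\lambda(P-I)}\mathcal I_M(h)$ explicitly by differentiating under the integral in the direction $\lambda(Ph-h)$, and then rearranges the integrand so that
\[
\po \partial_t\pf_{|\lambda(P-I)} \mathcal I_M\po h\pf \ =\ -\lambda\,\mathcal I_M(h) + \lambda\int\Big( -\Big|\tfrac{R\na h}{h} - \tfrac{R\na Ph}{Ph}\Big|^2 Ph + \tfrac{|R\na Ph|^2}{Ph}\Big)\dd\mu,
\]
and the inequality follows by dropping the manifestly nonpositive square. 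You instead observe that the integrand is the perspective $\phi(u,v)=|Rv|^2/u$ of a convex quadratic, hence jointly convex, so that $\mathcal I_M$ is a convex functional; the tangent-line inequality $f'(0)\leqslant f(1)-f(0)$ along the segment $s\mapsto (1-s)h+sPh$ then yields the claim. The two arguments are two faces of the same fact: the completed square in the paper is precisely the Hessian form of $\phi$ that you display, so the mechanisms coincide at the level of algebra, but yours packages it as a one-line convexity principle. The trade-off is a small extra technical step in your version: you must justify that $\po\partial_t\pf_{|\lambda(P-I)}\mathcal I_M(h)$ equals the derivative along the straight-line chord, i.e.\ that the $O(t^2)$ discrepancy between $e^{t\lambda(P-I)}h$ and $(1-\lambda t)h+\lambda t\,Ph$ does not contribute. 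You address this by pointing to the $\mathcal A_\varepsilon$ regularity, which is reasonable and at the same level of rigor as the paper, but the paper's direct computation bypasses this reduction entirely. It is also worth noting that the paper invokes essentially the same joint-convexity observation (via Jensen's inequality, attributed to Evans) a few lines later to handle $\mathcal I_M(Q_1 h)$, so your viewpoint unifies the two steps rather nicely.
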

\begin{proof}
The computation is similar to \cite[Lemma 3]{Evans2017}. Indeed,
 \begin{eqnarray*}
  \po \partial_t\pf_{|P-I} \mathcal I_M\po h\pf & = & \int \frac{2\po \na h\pf^T R^T R \na \po Ph - h\pf}{h} - \frac{|R\na h|^2(Ph-h) }{h^2} \dd\mu\\
  & = &    \int -\frac{|R\na h|^2}{h}\po 1 +\frac{Ph}{h}\pf + 2\frac{(\na h)^T R^T R \na P h }{h} \dd\mu\\
  & =& -   \mathcal I_M\po h\pf +\lambda \int -\left|\frac{R\na h}{h} -\frac{R\na P h}{P h} \right|^2 Q h + \frac{|R\na P h|^2}{P h} \dd \mu\\
  & \leqslant & - \mathcal I_M\po h\pf + \mathcal I_M\po P h\pf  
  \end{eqnarray*}
  where we used the positivity of the density $h$.
\end{proof}

For $k\in\cco 1,d\ccf$, let $E_k$ be the $2d\times2d$ diagonal matrix with all its coefficients being zero except \nv{the $(d+k,d+k)^{th}$ being equal to 1, and
\[ E\ =\  \sum_{k=1}^d E_k \ = \  \begin{pmatrix}
  0 & 0\\ 0 & I_d
  \end{pmatrix}\,,\qquad E' \ = \ I_{2d} - E \ = \  \begin{pmatrix}
  I_d & 0\\ 0 & 0
  \end{pmatrix}\,.\]
In the particular case of \eqref{EqPrincipale}, Lemma~\ref{LemFischerSaut} yields the following. 

\begin{lem}\label{Lem:nouveau}
Let $\lambda>0$,  $h\in\mathcal A_\varepsilon$  and $M=R^TR$ be a positive symmetric matrix. Then,
\begin{eqnarray}\label{EqQ1}
  \po \partial_t\pf_{|\eta(P-I)} \mathcal I_{M}\po h\pf & \leqslant & -\lambda \po \mathcal I_{E M + M E  -E  M E }(h)\pf
\end{eqnarray}
for $(P,\eta)=(P_1,\eta_1)$. Moreover, this is also true for $(P,\eta)=(P_2,\eta_2)$ if the  right down $d\times d$ corner of $M$ is an homothety.
\end{lem}
}
  \begin{proof}
  We recall the following argument from \cite[Lemma 1]{Evans2017}. From $\na_y P_1 =0$ and $\na_x P_1 = P_1\na_x$, $\mathcal I_M\po P_1 h\pf = \int  \phi \po P_1(\na h,h)\pf \dd \mu$, where $\phi(u,v) = (u^T E' M E' u)/v$.   Applying Jensen's Inequality to the convex function $\phi$  and the Markov operator $P_1$, we get $\phi \po P_1(h,\na h)\pf \leqslant P_1 \phi(h,\na h)$. Integrated with respect to $\mu$ (which is fixed by $P_1$), this reads
\[ \mathcal I_M\po P_1 h\pf \ \leqslant \ \mathcal I_{E'ME'}\po   h\pf. \]
Applying Lemma \ref{LemFischerSaut} yields \eqref{EqQ1} (since $\eta_1=\lambda$).

Similarly, denoting
\begin{eqnarray*}
P_{2,k} f(x,y) & =&  \int f(x,y_1,\dots,y_{k-1},w,y_{k+1},\dots,y_d) \frac{e^{-\frac1{2\sigma^2} w^2}}{  \sigma\sqrt{2\pi} } \dd w, 
\end{eqnarray*}
for $k\in\cco 1,d\ccf$, we get with the previous argument
\[ \mathcal I_M\po P_{2,k} h\pf \leqslant \ \mathcal I_{(I_{2d}-E_k)M(I_{2d}-E_k)}\po   h\pf, \]
so that
\begin{eqnarray*}
\po \partial_t\pf_{|\eta_2 (P_{2}-I)} \mathcal I_{M}\po h\pf  &= &\sum_{k=1}^d  \po \partial_t\pf_{|\lambda(P_{2,k}-I)} \mathcal I_{M}\po h\pf \notag\\
  & \leqslant & -\lambda \po \mathcal I_{ \sum_{k=1}^d \po E_k   M + M   E_k- E_k  M  E_k \pf}(h)\pf.
\end{eqnarray*}
Now, suppose that the right down $d\times d$ corner of $M$ is an homothety, i.e. that
\[M\ = \ \begin{pmatrix}
M_{1} & M_{2}\\
M_2^T & \alpha I_d
\end{pmatrix}\]
for some matrices $M_i$ and some $\alpha>0$. In that case,
\[E M E \ = \ \alpha \begin{pmatrix}
0 & 0 \\ 0 & 1
\end{pmatrix} \ = \ \alpha \sum_{k=1}^d E_k E_k \ =\  \sum_{k=1}^d E_k M E_k,    \]
which means that we have obtained the same bound \eqref{EqQ1} on $\po \partial_t\pf_{|\eta_i(P_{i}-I)} \mathcal I_{M}\po h\pf$ for both $i=1,2$.
  \end{proof}
  
\bigskip

On the other hand, the derivative of $\mathcal I_M$ along the transport semi-group $e^{tA}$ where $A=y\cdot \na_x  - \na_x U(x) \cdot \na_y $ is  a classical computation (see e.g. \cite[Example 8]{MonmarcheGamma}), which we recall for the sake of completeness:
\begin{lem}\label{LemDeriveT}
For $h\in \mathcal A_\varepsilon$,
 \begin{eqnarray}\label{EqFisherDrift}
  \po \partial_t\pf_{|A } \mathcal I_M\po h\pf 
   & = & \int \frac{(\na h)^T (MJ + J^T M) \na h }{h} \dd \mu
  \end{eqnarray}
with $J = \begin{pmatrix}
 0 & -\na^2 U   \\ 1 & 0
\end{pmatrix}$.
\end{lem}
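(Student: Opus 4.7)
The plan is to reduce the time derivative of $\mathcal{I}_M(h_t)$ along $e^{tT}$ to the pointwise commutator between $\nabla$ and $T$, and then discard the part that is itself of the form $Tf$ using the antisymmetry of $T$ in $L^2(\mu)$.

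First I would establish the commutation identity
\[
\nabla(Tf) \;=\; T(\nabla f) + J \nabla f,
\]
where $T$ acts componentwise on vector-valued functions. This is a direct application of the product rule: differentiating $y\cdot\nabla_x f$ in $y$ produces the extra term $\nabla_x f$ (the lower block of $J\nabla f$), and differentiating $\nabla_x U\cdot \nabla_y f$ in $x$ produces the extra term $\nabla^2 U\cdot \nabla_y f$ (the upper block, with the correct sign). The $x$-derivative of $y\cdot\nabla_x f$ and the $y$-derivative of $\nabla_x U\cdot\nabla_y f$ combine to reproduce $T$ acting on the corresponding components of $\nabla f$.

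Next, since $h\in\mathcal A_\varepsilon$ with $\varepsilon>0$ ensures positivity of $h_t$ and boundedness of its derivatives, I may differentiate under the integral to get
\begin{eqnarray*}
\left(\partial_t\right)_{|T}\mathcal I_M(h)
 & = & \int \frac{2(\nabla h)^T M\,\nabla(Th)}{h} - \frac{(\nabla h)^T M \nabla h}{h^2}\,Th \,\dd\mu.
\end{eqnarray*}
Substituting $\nabla(Th) = T(\nabla h) + J\nabla h$ and grouping terms, the pieces involving $T(\nabla h)$ and $Th$ assemble, via the Leibniz rule applied to the scalar $\frac{(\nabla h)^T M \nabla h}{h}$, into the single expression
\[
T\!\left(\frac{(\nabla h)^T M \nabla h}{h}\right).
\]
Because $T$ is a vector field preserving $\mu$, its integral against $\mu$ vanishes, so this whole piece drops out.

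What remains is $2\int \frac{(\nabla h)^T M J\nabla h}{h}\dd\mu$, which I symmetrize using the scalar identity $u^T M J u = \tfrac12 u^T(MJ + J^T M) u$ to obtain the stated formula. The only technical point worth checking is the justification for differentiating under the integral and the validity of the Leibniz rule for $T$ applied to the quotient $(\nabla h)^T M \nabla h / h$; both are immediate on $\mathcal A_\varepsilon$ for $\varepsilon>0$, since $h$ is smooth, bounded below, and has bounded derivatives, while $\mu$ has Gaussian-type decay under Assumption \ref{Hypothese}. No genuine obstacle arises; the computation is essentially a bookkeeping exercise around the commutator $[\nabla,T]=J$.
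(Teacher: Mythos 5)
Your proposal is correct and follows essentially the same route as the paper: differentiate under the integral, assemble the terms that form $T\bigl((\nabla h)^T M \nabla h / h\bigr)$ so they vanish upon integration against $\mu$, and reduce the remainder to the commutator identity $\nabla(Th) - T(\nabla h) = J\nabla h$, then symmetrize. The only (harmless) difference is that you spell out the commutator computation explicitly, which the paper merely states.
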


\begin{proof}
Since $A$ satisfies the chain rule,
 \begin{eqnarray*}
  \po \partial_t\pf_{|A } \mathcal I_M\po h\pf & = & \int \po \frac{2(\na h)^T M \na A h }{h} - \frac{Ah}{h^2} (\na h)^T M \na  h  \pf\dd \mu\\
   & = &  \int \po  A \po  \frac{2(\na h)^T M \na h }{h} \pf + \frac{2(\na h)^T M (\na A h-A\na h) }{h}\pf \dd \mu
  \end{eqnarray*}
where $A\na h$ should be understood coordinate by coordinate. Conclusion follows from $\int Ag \dd\mu = 0$ for all $g$ and $\na A h-A\na h = J\na h$.
\end{proof}

\nv{Combining the two previous results, we get:
\begin{prop}\label{PropDeriveFisher}
\nv{Under Assumption~\ref{Hypothese}}, let $h_t = e^{tL} h_0$ where $h_0 \in \mathcal A_{\varepsilon}$. Suppose that there exist $a,b,\theta\in \R$ with $b^2 < a$ and such that, for  $\xi\in\{\kappa,K\}$,
\begin{equation}\label{eq:lemMN}
N(\xi) \ := \ \nv{\begin{pmatrix}
2b  & \ &   a-\xi + \lambda b \\   a-\xi + \lambda b  & \ &   -2b\xi + \lambda a 
\end{pmatrix}} \ \geqslant \ \theta \begin{pmatrix}
1 & b  \\ b   & a   
\end{pmatrix} \ := \ \theta M.
\end{equation} 
Then, for all $t\geqslant0$,
 \begin{eqnarray*}
    \mathcal I_M\po h_t\pf & \leqslant &  e^{-\theta t} \mathcal I_M\po h_0\pf\,. 
  \end{eqnarray*}
\end{prop}
}
\begin{proof}
\nv{Let $a,b,\theta\in\R$ and $M$ be as in the proposition. Since $L=-A+\eta(P-I)$, Lemmas \ref{Lem:nouveau} and \ref{LemDeriveT} read
 \begin{eqnarray*}
   \partial_t \mathcal I_M\po h_t\pf & =&  \po \partial_t\pf_{|\eta(P-I)} \mathcal I_M\po h_t\pf  -  \po \partial_t\pf_{|A} \mathcal I_M\po h\pf \ \leqslant \  - \mathcal I_{N'}\po h\pf 
  \end{eqnarray*}
  with, for all $x\in \R^d$,
  \begin{eqnarray*}
  N'(x) & =& M \begin{pmatrix}  0 & \ & -\na^2 U(x)\\ 1 & \ &  \lambda  \end{pmatrix} + \begin{pmatrix}  0 & \ & 1\\ -\na^2 U(x) &\ &  \lambda \end{pmatrix}  M  - \lambda \begin{pmatrix}  0 &0\\ 0 & a \end{pmatrix} \\
&  = &
 \begin{pmatrix}
2b  & \ &   a-\na^2 U(x) + \lambda b \\   a-\na^2 U(x) + \lambda b  & \ &   -2b\na^2 U(x) + \lambda a 
\end{pmatrix}\,.
  \end{eqnarray*}
  The proof will be concluded (by the Gronwall's Lemma) if we prove that $N'(x) \geqslant \theta M$ for all $x\in \R^d$. Fix $x\in \R^d$, and let $\mathcal O(x)$ be an orthonormal $d\times d$ matrix such that $\mathcal O^T(x) \na^2 U(x) \mathcal O(x)$ is diagonal. Let
  \[\mathcal O' \ = \ \begin{pmatrix}
  \mathcal O(x)  & 0 \\ 0 & \mathcal O(x)
  \end{pmatrix}\,.\]
  Notice that $N'(x) \geqslant \theta M$ if and only if $\mathcal O^TN'(x)\mathcal O \geqslant \theta M$, where we used that $\mathcal O ^T M \mathcal O  =  M$. Now, $\mathcal O^TN'(x)\mathcal O \geqslant \theta M$ if and only if $N(\xi_k) \geqslant \theta M$ for all eigenvalues $\xi_k$ of $\na^2 U(x)$, $k\in\cco 1,d\ccf$. Writing such an eigenvalue   as $\xi_k = p_k \kappa + (1-p_k)K$ for some $p_k\in[0,1]$, we get
  \[N(\xi_k) \ = \ p_k N(\kappa) + (1-p_k) N(K) \ \geqslant \ \theta M\]
  by assumption, which concludes.}
\end{proof}

\nv{With Proposition~\ref{PropDeriveFisher} in hand, the proof of  our main result has boiled down to elementary computations.}

\begin{proof}[Proof of Theorem \ref{ThmMain}]
\nv{Let us find $a$ and $b$ such that $N(\xi)$ as given by \eqref{eq:lemMN} is definite positive for a given $\xi$ (to be chosen later on). For simplicity, we want to enforce the following conditions:
\[4b^2 \leqslant  a\,, \qquad a+\lambda b = \xi\,,  \qquad \lambda a \geqslant 4 b\xi\,,\]
which ensures that $N(\xi)$ is diagonal with positive terms and that the corresponding $M$ is definite positive. It is clear that such conditions are satisfied for $b$ small enough with $a=\xi-\lambda b$. More precisely, the first condition is implied by the third if $b\leqslant  \xi/\lambda$, and the third is implied by the second if $b\leqslant \lambda \xi /(4\xi + \lambda^2)$ (notice that $\lambda \xi /(4\xi + \lambda^2) \leqslant \xi/\lambda$). As a consequence, we chose
\[b \ = \  \frac{\lambda \xi}{4\xi + \lambda^2}\,, \qquad a \ =\  \xi - \lambda b \ = \ \frac{4\xi^2}{4\xi + \lambda^2}\,. \]
The condition $4b^2 \leqslant a$ is such that the corresponding matrix $M$ satisfies
\begin{equation*}
\frac12 
\begin{pmatrix}
1 & 0 \\ 0 & a 
\end{pmatrix}\ 
\leqslant \ M \ \leqslant \ \frac32 
\begin{pmatrix}
1 & 0 \\ 0 & a 
\end{pmatrix}\,.
\end{equation*}
The choice of $a$ and $b$ also ensures that
\[N(\xi) \ = \ \begin{pmatrix}
2b  & \ &   0 \\   0 & \ &   -2b\xi + \lambda a 
\end{pmatrix} \ \geqslant \ 
 \begin{pmatrix}
2b  & \ &   0 \\   0 & \ &   \frac{\lambda a}2 
\end{pmatrix}\,.
\]
For $\xi'\in\{\kappa,K\}$, for all $\gamma>0$,
\begin{align*}
N(\xi') \ & =\  N(\xi) + \begin{pmatrix}
0 & \ & \xi - \xi' \\ \xi-\xi' &\ &  2b(\xi-\xi')
\end{pmatrix} \\
& \geqslant \ 
 \begin{pmatrix}
2b  - \gamma(\xi-\xi')^2 & \ &   0 \\   0 & \ &   \frac{\lambda a}2  + 2b(\xi-\xi') - \frac{1}{\gamma}(\xi-\xi')^2
\end{pmatrix}\,.
\end{align*}
In other words,
\[N(\xi') \ \geqslant \  \theta_1  \begin{pmatrix}
1& \ &   0 \\   0 & \ &  a
\end{pmatrix}\]
with
\[\theta_1 \ = \ \min\po 2b \po 1 -\frac{\gamma}{2b}(\xi-\xi')^2 \pf ,  \frac\lambda2  \po 1 -\frac{2}{\lambda a\gamma }(\xi-\xi')^2 +   \frac{\xi-\xi'}{\xi}\pf \pf\,.\]
Using that $2b\leqslant \lambda/2$,  we chose $\gamma^2 = 4b/(\lambda a) = 1/ \xi$ to get that
\[\theta_1 \ \geqslant \ \theta_2 \ :=\  2b \po 1 -\frac{1}{2b\sqrt\xi }(\xi-\xi')^2 +   \frac{\xi-\xi'}{\xi}\pf\,.\]
Finally, we simply chose $\xi = K$, so that $\xi-\xi'\geqslant 0$ for $\xi'\in\{\kappa,K\}$. Assuming that $(K-\kappa)^2 \leqslant 2b\sqrt\xi$, we get that $\theta_2 \geqslant 0$ for both $\xi'\in \{\kappa,K\}$, and thus
\[N(\xi') \ \geqslant \  \theta_2  \begin{pmatrix}
1& \ &   0 \\   0 & \ &  a
\end{pmatrix} \ \geqslant \  \frac{2\theta_2}3 M \ = \ \theta M\,, \]
and we conclude by 
\[\mathcal I(h_t) \leqslant \frac{2}{\min(1,a)} \mathcal I_{M}(h_t) \leqslant \frac{2}{\min(1,a)} e^{-\theta t} \mathcal I_{M}(h_0) \leqslant \frac{3\max(1,a)}{\min(1,a)} e^{-\theta t} \mathcal I(h_0)\,.  \]
}
\end{proof}

\subsection*{Acknowledgements}
The author would like to thank Stefano Olla for introducing him to this question, and Max Fathi for fruitful discussions. He acknowledges financial support from  the project EFI ANR-17-CE40-0030 of the French National Research Agency.

\bibliographystyle{plain}
\bibliography{biblio}

\begin{thebibliography}{10}

\bibitem{Achleitner2015}
F.~Achleitner, A.~Arnold, and E.~A. Carlen.
\newblock On linear hypocoercive {BGK} models.
\newblock In Patr{\'i}cia Gon{\c{c}}alves and Ana~Jacinta Soares, editors, {\em
  From Particle Systems to Partial Differential Equations III}, pages 1--37,
  Cham, 2016. Springer International Publishing.

\bibitem{Arnold2015}
F.~{Achleitner}, A.~{Arnold}, and D.~{St{\"u}rzer}.
\newblock Large-time behavior in non-symmetric {F}okker-{P}lanck equations.
\newblock {\em Riv. Math. Univ. Parma (N.S.)}, 6(1):1--68, 2015.

\bibitem{Arnold2014}
A.~Arnold and Jan Erb.
\newblock Sharp entropy decay for hypocoercive and non-symmetric fokker-planck
  equations with linear drift.
\newblock {\em arXiv: Analysis of PDEs}, 2014.

\bibitem{BakryGentilLedoux}
D.~Bakry, I.~Gentil, and M.~Ledoux.
\newblock {\em Analysis and geometry of {M}arkov diffusion operators}, volume
  348 of {\em Grundlehren der Mathematischen Wissenschaften [Fundamental
  Principles of Mathematical Sciences]}.
\newblock Springer, Cham, 2014.

\bibitem{Baudoin}
F.~Baudoin.
\newblock {B}akry–{É}mery meet {V}illani.
\newblock {\em Journal of Functional Analysis}, 273(7):2275 -- 2291, 2017.

\bibitem{RHMC}
N.~Bou-Rabee and J.M. Sanz-Serna.
\newblock {R}andomized {H}amiltonian {M}onte {C}arlo.
\newblock {\em Ann. Appl. Probab.}, 27(4):2159--2194, 08 2017.

\bibitem{CaceresCarilloGoudon}
M.~J. C\'aceres, J.~A. Carrillo, and T.~Goudon.
\newblock Equilibration rate for the linear inhomogeneous relaxation-time
  {B}oltzmann equation for charged particles.
\newblock {\em Comm. Partial Differential Equations}, 28(5-6):969--989, 2003.

\bibitem{DMS2011}
J.~Dolbeault, C.~Mouhot, and C.~Schmeiser.
\newblock Hypocoercivity for linear kinetic equations conserving mass.
\newblock {\em Trans. Amer. Math. Soc.}, 367(6):3807--3828, 2015.

\bibitem{Evans2017}
J.~{Evans}.
\newblock {Hypocoercivity in Phi-entropy for the Linear Boltzmann Equation on
  the Torus}.
\newblock {\em ArXiv e-prints}, February 2017.

\bibitem{MonmarcheGuillinVFP}
A.~{Guillin} and P.~{Monmarch{\'e}}.
\newblock {Uniform long-time and propagation of chaos estimates for mean field
  kinetic particles in non-convex landscapes}.
\newblock {\em arXiv e-prints}, page arXiv:2003.00735, March 2020.

\bibitem{HairerMattingly}
M.~Hairer and J.C. Mattingly.
\newblock Slow energy dissipation in anharmonic oscillator chains.
\newblock {\em Comm. Pure Appl. Math.}, 62(8):999--1032, 2009.

\bibitem{Leautaud2015}
D.~Han-Kwan and M.~L\'eautaud.
\newblock Geometric analysis of the linear {B}oltzmann equation {I}. {T}rend to
  equilibrium.
\newblock {\em Ann. PDE}, 1(1):Art. 3, 84, 2015.

\bibitem{Herau2005}
F.~H{\'e}rau.
\newblock Hypocoercivity and exponential time decay for the linear
  inhomogeneous relaxation {B}oltzmann equation.
\newblock {\em Asymptot. Anal.}, 46(3-4):349--359, 2006.

\bibitem{Iacobucci2017}
A.~{Iacobucci}, S.~{Olla}, and G.~{Stoltz}.
\newblock {Convergence rates for nonequilibrium Langevin dynamics}.
\newblock {\em Ann. Math. Qu{\'e}bec}, 43:73--98, 2019.

\bibitem{OllaLetizia}
V.~Letizia and S.~Olla.
\newblock Nonequilibrium isothermal transformations in a temperature gradient
  from a microscopic dynamics.
\newblock {\em Ann. Probab.}, 45(6A):3987--4018, 11 2017.

\bibitem{MonmarcheVFP}
P.~Monmarch{\'e}.
\newblock Long-time behaviour and propagation of chaos for mean field kinetic
  particles.
\newblock {\em Stochastic Processes and their Applications}, 2016.

\bibitem{MonmarcheGamma}
P.~{Monmarch{\'e}}.
\newblock Generalized {$\Gamma$} calculus and application to interacting
  particles on a graph.
\newblock {\em Potential Anal.}, 50(3):439--466, 2019.

\bibitem{Villani2009}
C.~Villani.
\newblock Hypocoercivity.
\newblock {\em Mem. Amer. Math. Soc.}, 202(950):iv+141, 2009.

\bibitem{Wang2015}
F.-Y. {Wang}.
\newblock Hypercontractivity and applications for stochastic hamiltonian
  systems.
\newblock {\em Journal of Functional Analysis}, 272(12):5360 -- 5383, 2017.

\end{thebibliography}

\end{document}